\documentclass[AMA,STIX1COL]{WileyNJD-v2}
\usepackage{moreverb}


\newcommand\BibTeX{{\rmfamily B\kern-.05em \textsc{i\kern-.025em b}\kern-.08em
T\kern-.1667em\lower.7ex\hbox{E}\kern-.125emX}}

\articletype{Article Type}%

\received{<day> <Month>, <year>}
\revised{<day> <Month>, <year>}
\accepted{<day> <Month>, <year>}

\usepackage{geometry} 
\usepackage{amsmath}        
\geometry{letterpaper}          
\usepackage{graphicx}
\usepackage{amsthm}
\usepackage{epstopdf}
\usepackage{amsfonts}
\usepackage{hyperref}
\makeatletter
\def\amsbb{\use@mathgroup \M@U \symAMSb}
\makeatother
\usepackage{bbold}
\usepackage{xcolor}
\usepackage{booktabs}
\usepackage{bm}
\usepackage{placeins}

\newtheorem{thm}{Theorem}
\newtheorem{cor}{Corollary}
\theoremstyle{definition}
\newtheorem{de}{Definition}
\newtheorem{re}{Remark}
\theoremstyle{lemma}

\DeclareGraphicsRule{.tif}{png}{.png}{`convert #1 `dirname #1`/`basename #1 .tif`.png}
\def\o{\omega}
\def\d{\delta}

\def\th{{\theta}}
\def\la{\lambda}
\def\E{\amsbb{E}}
\def\V{\amsbb{V}ar}
\def\C{\amsbb{C}ov}

\def\P{\amsbb{P}}
\def\R{\amsbb{R}}
\def\N{\amsbb{N}}
\def\nn{\nonumber}
\def\ga{\gamma}
\def\ro{{\cal R}_0}
\def\tS{\tilde S}
\def\D{\mathrm{d}}
\def\exp{\ensuremath{\rm{exp\,}}}
\def\log{\ensuremath{\rm{log\,}}}
\def\ln{\ensuremath{\rm{log\,}}}
\newcommand\ind[1]{\amsbb{I}_{#1}}

\def\rd{\color{black}}


\begin{document}

\title{Throwing Stones and Collecting Bones: Looking for Poisson-like Random Measures}

\author[1]{Caleb Deen Bastian*}

\author[2]{Grzegorz A Rempala}

\authormark{Bastian \textsc{et al}}

\address[1]{\orgdiv{Program in Applied and Computational Mathematics}, \orgname{Princeton University}, \orgaddress{\state{New Jersey}, \country{USA}}}

\address[2]{\orgdiv{Division of Biostatistics and Department of Mathematics}, \orgname{The Ohio State University}, \orgaddress{\state{Ohio}, \country{USA}}}

\corres{*Caleb Bastian, Program in Applied and Computational Mathematics, Washington Road, Fine Hall, Princeton University, Princeton, NJ. 08544 \email{cbastian@princeton.edu}}


\abstract[Abstract]{We show that in a broad class of random counting measures one may identify only three that are rescaled versions of themselves when restricted to a subspace. These are Poisson, binomial and negative binomial random measures. We provide some simple examples of possible applications of such measures.}

\keywords{random counting measure; Poisson-type (PT) distributions; stone throwing construction; Laplace functional; strong invariance; thinning}


\maketitle

\section{Introduction} 

Random counting measures, also known as point processes, are the central objects of this note. For general introduction see, for instance monographs by\cite{kallenberg},\cite{cinlar}, or\cite{rm}. Random counting measures have numerous uses in statistics and applied probability, including representation and construction of stochastic processes, Monte Carlo schemes, etc. For example, the \emph{Poisson} random measure is a fundamental random counting measure that is related to the structure of L\'{e}vy processes, Markov jump processes, or the excursions of Brownian motion,  and is prototypical to the class of completely random (additive) random measures\citep{cinlar}.  In particular, it is also well know that  the Poisson random measure is self-similar in the sense of being invariant under  restriction to  a sub-space (invariant under thinning). The \emph{binomial} random measure is another fundamental random counting measure that underlies the theory of autoregressive integer-valued processes\citep{steutel1979discrete,binthin}. 

In this note we explore a broad class of random counting measures to identify those that share the Poisson self-similarity property and discuss their possible applications.  The paper is organized as follows. In the next section (Section 2) we provide necessary background and lay out the main mathematical results whereas in the following section (Section 3) we give examples of possible applications in different areas of modern sciences, from epidemiology to consumer research to traffic flows. 

The main result of the note is Theorem~\ref{thm:ptrms}, which  identifies in a broad class of random counting measures  those that are closed under restriction to subspaces, i.e. invariant under thinning. They are the Poisson, negative binomial, and binomial random measures. We show that the corresponding  counting distributions are the \emph{only} distributions in the power series family that are invariant under thinning. We also give simple examples to highlight calculus of PT random measures {\rd  and their possible applications}.  

\section{Throwing Stones and Looking for Bones}
\noindent Consider measurable space $(E,{\cal E})$ with some collection ${\bf X}=\{ X_i\}$ of iid random variables (stones) with law $\nu$ and some  non-negative integer valued random variable $K$ ($K\in \N_{\ge 0}=\N_{>0}\cup \{0\}$) {\rd with law $\kappa$} that is independent of $\bf X$ and has finite mean $c$. Whenever it exists, the variance of $K$ is denoted by $\d^2>0$. {\rd Let $\mathcal{E}_+$ be the set of positive $\mathcal{E}$-measurable functions.}

{\rd It is well known\cite{cinlar}  that the random counting measure $N$ on $(E,\mathcal{E})$ is uniquely determined  by  the pair of deterministic probability measures $(\kappa,\nu)$ through the} so-called {\em stone throwing construction} (STC) as follows. For every outcome  $\omega\in\Omega$
\begin{equation}\label{eq:stc} N_\o(A) =N(\o,A)=\sum_{i=1}^{K(\o)}\ind{A}(X_i(\o)) \quad \text{for }\,\,A \in {\cal E} \end{equation} {\rd  where $K$ has law $\kappa$,  the iid $X_1, X_2,\dotsb$ have law $\nu$ and $\ind{A}(\cdot)$ denotes  the indicator function for set $A$. Below we  write $N=(\kappa,\nu)$ to denote  the random measure $N$ determined by  $(\kappa,\nu)$ through STC. We note that $N$ may be also regarded as  a mixed binomial process\citep{rm}. In particular, when $\kappa$ is the Dirac measure, then $N$ is a binomial process\cite{rm}.}
Note that on any test function $f\in {\cal E}_+$ $$ N_\o f =\sum_i^{K(\o)} f\circ X_i(\o)= \sum_i^{K(\o)} f (X_i(\o)).$$ Below for brevity we write $Nf$, so that e.g., $N(A)=N\ind{A}$. It follows from the above and the independence of $K$ and 
 $\bf X$ that 
\begin{align}
\E Nf &=c\nu f \label{eq:01} \\
\V Nf &= c\nu f^2 + (\d^2-c) (\nu f)^2\label{eq:var2}
\end{align} 
and that the Laplace functional for $N$ is 
$$\E e^{-N f} =\E (\E e^{-f(X)})^K =\E (\nu e^{-f})^K=\psi(\nu e^{-f})$$ where $\psi(t)=\E\, t^K$ is the probability generating function (pgf) of $K$. {\rd  In what follows,  we will also  sometimes consider the alternate pgf (apgf)  defined as $\tilde{\psi}(t)=\E(1-t)^K.$}  Note also  that for any measurable partition of $E$, say $\{A,\ldots,B \} $, the joint distribution of the collection $N(A),\ldots,N(B)$ is for $i,\ldots, j \in \N$ and $i+\cdots+j =k$
\begin{align}\label{eq:02}
&\P(N(A)=i,\ldots, N(B)=j)\\
 &=\P(N(A)=i,\ldots, N(B)=j|K=k)\,\P(K=k)\nn\\
&=\frac{k!}{i!\cdots j!}\,\nu(A)^i\cdots \nu(B)^j\, \P(K=k).\nn
\end{align}

The following result extends construction  of a random measure $N=(K,\nu)$ to the case when the collection $\bf X$ is expanded to $({\bf X},{\bf Y})=\{(X_i,Y_i)\}$ where $Y_i$ is a random transformation of $X_i$.  Heuristically, $Y_i$ represents some properties (marks) of $X_i$. We assume that the conditional law of $Y$ follows some transition kernel according to $\P(Y\in B|X=x)=Q(x,B)$.
 \begin{thm}[Marked STC]\label{thm:marks} Consider random measure $N=(K,\nu)$ and the transition probability kernel  $Q$ from $(E, \cal E)$ into $(F, \cal F)$.  Assume that given the collection $\bf X$ the variables ${\bf Y}=\{Y_i\}$ are conditionally independent with $Y_i\sim Q(X_i,\cdot)$. Then $M=(K, \nu\times Q)$ is a random measure on $(E\times F, \cal E\otimes F)$. Here $\mu=\nu\times Q$ is understood as $\mu(dx,dy)=\nu(dx)Q(x,dy)$. Moreover, for any $f\in ({\cal E}\otimes {\cal F})_+$ 
 $$\E e^{-M f}=\psi(\nu e^{-g})$$ where $\psi(\cdot )$ is pgf of $K$ and $g\in {\cal E}_+$ satisfies  
$e^{-g(x)}= \int_F Q(x,dy)e^{-f(x,y)}.$ 
 \end{thm}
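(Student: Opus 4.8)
The plan is to recognize $M$ as nothing more than a stone throwing construction in which the marked points $(X_i,Y_i)$ play the role of the stones and the mixing variable $K$ is unchanged; once this is established, the Laplace functional follows by specializing the formula already derived for a generic STC measure. First I would verify that the pairs $\{(X_i,Y_i)\}$ are iid with the single joint law $\mu=\nu\times Q$. This is where I expect essentially all of the work to lie. One combines the hypothesis that the $X_i$ are iid $\nu$ with the conditional independence of the $Y_i$ given $\bf X$ (where $Y_i\sim Q(X_i,\cdot)$) to show that the joint law of any finite block of pairs factorizes into copies of $\mu(dx,dy)=\nu(dx)Q(x,dy)$. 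Concretely, on a rectangle $A_i\times B_i$ the tower property followed by independence of the $X_i$ gives
$$\P\big(X_i\in A_i,\,Y_i\in B_i\text{ for all }i\big)=\E\prod_i \ind{A_i}(X_i)\,Q(X_i,B_i)=\prod_i\int_{A_i} Q(x,B_i)\,\nu(dx),$$
and a $\pi$--$\lambda$ (monotone class) argument extends this from rectangles to all of ${\cal E}\otimes{\cal F}$, so each pair has law $\mu$ and the pairs are mutually independent. Since $K$ is independent of $\bf X$ and the marking operation introduces no further dependence on $K$, the variable $K$ remains independent of the marked collection, and hence $M=(K,\mu)$ is a bona fide random counting measure on $(E\times F,{\cal E}\otimes{\cal F})$ by the same fact (cited to Çınlar in the excerpt) that makes $N=(K,\nu)$ one.

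Granting this, the Laplace functional is immediate. Applying the STC identity $\E e^{-Nf}=\psi(\nu e^{-f})$ verbatim on the product space, with $\nu$ replaced by $\mu$ and $f$ now a function on $E\times F$, yields $\E e^{-Mf}=\psi(\mu e^{-f})$. It then remains only to rewrite the scalar $\mu e^{-f}$. Using the disintegration $\mu(dx,dy)=\nu(dx)Q(x,dy)$ together with Fubini--Tonelli (legitimate since $e^{-f}\ge 0$),
$$\mu e^{-f}=\int_E\left(\int_F e^{-f(x,y)}\,Q(x,dy)\right)\nu(dx)=\int_E e^{-g(x)}\,\nu(dx)=\nu e^{-g},$$
where the middle equality invokes the definition $e^{-g(x)}=\int_F Q(x,dy)\,e^{-f(x,y)}$. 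Substituting gives $\E e^{-Mf}=\psi(\nu e^{-g})$, as claimed.

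A few routine checks I would fold in rather than dwell on: that $g$ is a well-defined element of ${\cal E}_+$, and that Fubini applies. Measurability of $x\mapsto\int_F Q(x,dy)\,e^{-f(x,y)}$ is the standard fact that integrating a jointly measurable nonnegative function against a transition kernel produces a measurable function of $x$; and $g\ge 0$ because $Q(x,\cdot)$ is a probability measure and $e^{-f}\le 1$, so the inner integral lies in $(0,1]$ and $g=-\log(\cdot)\in[0,\infty)$. With these in hand the argument is complete, the entire conceptual content being the identification of $M$ as an STC measure driven by the product kernel $\mu$, after which everything reduces to the single-measure case already treated.
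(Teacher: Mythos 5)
Your proof is correct and follows essentially the same route as the paper's: treat $M$ as an STC measure whose stones are the pairs $(X_i,Y_i)$ with law $\mu=\nu\times Q$, compute $\E e^{-Mf}=\psi(\mu e^{-f})$, and reduce $\mu e^{-f}$ to $\nu e^{-g}$ by disintegration and Fubini--Tonelli. The only difference is one of explicitness: you spell out the $\pi$--$\lambda$ verification that the marked pairs are iid with law $\mu$ and that $K$ stays independent of them, a step the paper leaves implicit in the identity $\E e^{-Mf}=\E\bigl(\E e^{-f(X,Y)}\bigr)^K$.
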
 
The proof of this result is standard but for convenience we provide it in the appendix. For any $A\subset E$ with $\nu(A)>0$ define the conditional law $\nu_A$ by $\nu_A(B)= \nu (A\cap B)/\nu(A)$. The following is a simple consequence of Theorem~\ref{thm:marks} upon taking the transition kernel $Q(x,B)=\ind{A}(x)\,\nu_A(B)$.
\begin{cor}\label{cor:1} $N_A=(N\ind{A},\nu_A)$ is a well-defined random measure on the measurable subspace $(E\cap A, {\cal E}_A )$ where ${\cal E}_A=\{A\cap B: B\in {\cal E} \}$. Moreover, for any $f \in{\cal E}_+$ $$\E e^{-N_Af}=\psi(\nu e^{-f}\ind{A}+b)$$ where $b=1-\nu(A)$. \end{cor}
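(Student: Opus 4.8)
The plan is to realize $N_A$ as the mark-marginal of a marked stone-throwing construction and then read off its Laplace functional from Theorem~\ref{thm:marks}. First I would take the mark space to be $F=E$ together with the proposed kernel $Q(x,B)=\ind{A}(x)\,\nu_A(B)$, noting immediately the one subtlety: this is only a sub-probability kernel, since for $x\in A^c$ we have $Q(x,E)=0$ and a unit of mass is unaccounted for. I would repair this by adjoining a cemetery state $\partial$ to the mark space and setting $Q(x,\{\partial\})=\ind{A^c}(x)$, so that $Q(x,\cdot)$ becomes a genuine transition probability kernel from $(E,\mathcal{E})$ into the enlarged mark space $E\cup\{\partial\}$: a stone at $x\in A$ receives a mark drawn from $\nu_A$, while a stone at $x\notin A$ is sent to $\partial$.

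Next I would identify $N_A$ with the restriction to $E$ of the mark-marginal of $M=(K,\nu\times Q)$. By construction the marks $Y_i$ landing in $E$ (rather than at $\partial$) are exactly those produced by stones $X_i\in A$, so their number is $N(A)=N\ind{A}$. Crucially, on $A$ the kernel $Q(x,\cdot)=\nu_A$ does not depend on $x$, so these retained marks are iid with law $\nu_A$ and decoupled from the stone positions; conditioning on their count $N(A)$ thus leaves them iid $\nu_A$. This is precisely the STC pair $(N\ind{A},\nu_A)$, which shows $N_A$ is a well-defined random measure, as a measurable image of the random measure $M$ furnished by Theorem~\ref{thm:marks}, living on $(E\cap A,\mathcal{E}_A)$.

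To obtain the Laplace functional, given $f\in\mathcal{E}_+$ I would apply $M$ to the test function $h(x,y)=f(y)\ind{E}(y)$ (equivalently $h(x,\partial)=0$), so that $Mh=\sum_{i:\,Y_i\in E} f(Y_i)=N_A f$. Theorem~\ref{thm:marks} then gives $\E e^{-N_Af}=\E e^{-Mh}=\psi(\nu e^{-g})$ with $e^{-g(x)}=\int Q(x,dy)e^{-h(x,y)}$. I would evaluate $g$ by splitting on whether $x\in A$: on $A$ one picks up the factor $\nu_A e^{-f}$, while on $A^c$ the cemetery contributes $e^{-h(x,\partial)}=1$, giving $e^{-g(x)}=\ind{A}(x)\,\nu_A e^{-f}+\ind{A^c}(x)$. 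Integrating against $\nu$ and using $\nu(A)\,\nu_A e^{-f}=\nu(e^{-f}\ind{A})$ together with $\nu(A^c)=1-\nu(A)=b$ yields $\nu e^{-g}=\nu e^{-f}\ind{A}+b$, which is exactly the claimed argument of $\psi$.

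The main obstacle is the defect mass of $Q$: a naive substitution into Theorem~\ref{thm:marks}, treating $Q$ as if it were a probability kernel, would silently drop the contribution of the stones outside $A$ and produce $\psi(\nu e^{-f}\ind{A})$ rather than $\psi(\nu e^{-f}\ind{A}+b)$. Tracking those discarded stones through the cemetery state is precisely what generates the additive constant $b=1-\nu(A)$, so the care taken in the first step is essential rather than cosmetic. As an independent check one can bypass Theorem~\ref{thm:marks} altogether and compute $\E e^{-N_Af}=\E(\E e^{-\ind{A}(X)f(X)})^K=\psi(\nu(e^{-f}\ind{A})+\nu(A^c))$ directly from the STC for $N$, recovering the same expression.
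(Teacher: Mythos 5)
Your proof is correct and follows the paper's intended route: the paper obtains the corollary precisely by plugging the kernel $Q(x,B)=\ind{A}(x)\,\nu_A(B)$ into Theorem~\ref{thm:marks}. Your cemetery-state repair of the sub-stochastic kernel is the right way to make that one-line derivation rigorous --- it is exactly what produces the additive constant $b=1-\nu(A)$ that a naive substitution would miss --- and your closing direct computation $\E e^{-N_Af}=\psi(\nu(e^{-f}\ind{A})+\nu(A^c))$ confirms the same formula.
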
 

In many practical situations one is interested in analyzing random measures of the form $N=(K, \nu\times Q)$ while having some information about the restricted measure $N_A=(N\ind{A}, \nu_A\times Q)$. Note that the  counting variable for $N_A$ is $K_A= N\ind{A}$, the original counting variable $K$ restricted to {\rd (thinned by)} the subset $A\subset E$.  The purpose of this note is to identify the families of counting distributions $K$ for which the family of random measures $\{N_A: A\subset E\}$ {\rd belongs to  the same  family of distributions}. We refer to such families of counting distributions as ``bones" and give their formal definition below. {\rd The term  reflects the   prototypical or foundational nature of these families within the class of random measures considered here.}  One obvious example is the Poisson family of distributions, but it turns out that there are also others. The definite result on the existence and uniqueness  of random measures based on such ``bones'' in a broad class is given in Theorem~\ref{thm:ptrms} of Section~\ref{sec:ex}.

\subsection{Subset Invariant Families (Bones) }
Let $N=(\kappa_\theta,\nu)$ be the random measure on $(E, {\cal E})$, where $\kappa_\theta$ is the distribution of $K$  parametrized by  $\th>0$, that is, $\P(K=k)_{k\ge 0}= (p_k(\th))_{k\ge 0}$ where we assume $p_0(\th)>0$. For brevity,  we write  below $K\sim \kappa_\theta$. 

Consider the family of random variables $\{ N\ind{A}: \, A\subset E \}$ and let $\psi_A(t)$ be the pgf of $N\ind{A}$ with $\psi_\th(t)=\psi_E(t)$ being the pgf of $K$ (since $N\ind{E}=K).$
Let $a=\nu(A)$, $b=1-a$ and note that $$ \psi_A(t) = \E (\E\, t^{\ind{A}})^K=\E (a t+b)^K=\psi_\th(at+b),$$ {\rd or equivalently,  in terms of apgf,   $\tilde{\psi}_A(t) = \tilde{\psi}_\theta(at)$.}


\begin{de}[Bones]\label{def:1} We say that the family {\rd $\{\kappa_\theta: \theta\in\Theta\}$} of counting probability measures is  {\em strongly invariant} with respect to the family $\{ N\ind{A}: \, A\subset E \}$ (is a {\em ``bone''})  if for any $0<a\le 1$ there exists a mapping $h_a: \Theta\to \Theta$ such that \begin{equation}\label{eq:2}\psi_\th(at+1-a)= \psi_{h_a(\theta)}(t).\end{equation}
\end{de}

{\rd  Note that in terms of apgf the above  condition becomes  simply $\tilde{\psi}_\theta(at)=\tilde{\psi}_{h_a(\theta)}(t)$.


}

In Table~\ref{tab:1} we give some examples of such invariant (``bone'') families. 

\begin{table}
\begin{center}
\begin{tabular}{l c c c }
\toprule
Name &Parameter $\th$ & $\psi_\th(t)$& $h_a(\theta)$ \\\midrule
Poisson & $\la$ & $\exp[{\th(t-1)}]$ &$a\th$ \\
Bernoulli & $p/(1-p)$ & $(1+\th t)/(1+\th)$& $a\th/(1+(1-a)\th)$\\
Geometric & $p$ & $(1-\th)/(1-t\th)$& $a\th/(1-(1-a)\th)$\\\bottomrule
\end{tabular}\caption{Some examples of ``bone'' distributions with corresponding pgfs and mappings of their canonical parameters.}\label{tab:1}
\end{center}
\end{table}

\subsection{Finding Bones in Power Series Family}\label{sec:ex}

Consider the family {\rd $\{\kappa_\theta: \theta\in\Theta\}$}  to be in the form of the non-negative power series (NNPS)  where \begin{equation}\label{eq:3} p_k(\th) = a_k \th^k /g(\th).\end{equation} and $p_0>0$. We call NNPS canonical if $a_0=1$. Setting $b=1-a$ we see that for {\rd canonical} NNPS the bone condition in Definition~\ref{def:1} becomes 
\begin{equation}\label{eq:4} g((at+b)\th)= g(b\th)g(h_a(\theta)t).\end{equation}
The following is a fundamental result on the existence of ``bones'' in the NNPS family.  
\begin{thm}[Bones in NNPS]\label{thm:bones} {\rd Let $\nu$ be diffuse (i.e., non-atomic)}. For canonical NNPS $\kappa_\theta$ satisfying additionally $a_1>0$, the relation \eqref{eq:4} holds iff $\log g(\th) =\th $ or $\log g(\th) =\pm {\rd c}\, \log(1\pm \th)$
 where ${\rd c}>0$. 
\end{thm}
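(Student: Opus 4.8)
The plan is to turn the bone condition \eqref{eq:4} into an ordinary differential equation for $\log g$. Write $\phi=\log g$ and note that for canonical NNPS the pgf is $\psi_\theta(t)=g(\theta t)/g(\theta)$, so with $b=1-a$ the relation \eqref{eq:4} reads $g\big(\theta(at+b)\big)=g(b\theta)\,g\big(h_a(\theta)t\big)$. First I would pin down the unknown map $h_a$ by differentiating in $t$ at $t=0$: since $a_0=1$ gives $g'(0)=a_1$, this yields $h_a(\theta)=\tfrac{a\theta}{a_1}\,\phi'(b\theta)$. Introducing $\beta=(1-a)\theta$ and $s(\beta)=\phi'(\beta)/a_1$ (so that $s(0)=1$, using $a_1>0$) this is exactly $h_a(\theta)=a\theta\,s\big((1-a)\theta\big)$, which already reproduces the $h_a$ column of Table~\ref{tab:1}. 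Substituting $x=\theta t$ and $y=ax$, the three-variable relation \eqref{eq:4} collapses to the clean two-point identity
\[
g(y+\beta)=g(\beta)\,g\big(s(\beta)\,y\big).
\]
Because $\nu$ is diffuse, $a=\nu(A)$ attains every value in $(0,1]$, so $\beta=(1-a)\theta$ sweeps all of $(0,\infty)$; real-analyticity of the convergent series $g$ then promotes the local identity to a global one.

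Next I would extract an ODE for $s$. Taking logarithms gives $\phi(y+\beta)=\phi(\beta)+\phi\big(s(\beta)y\big)$, and differentiating in $y$ and in $\beta$ produces
\[
\phi'(y+\beta)=s(\beta)\,\phi'\big(s(\beta)y\big),
\qquad
\phi'(y+\beta)=\phi'(\beta)+s'(\beta)\,y\,\phi'\big(s(\beta)y\big).
\]
Eliminating $\phi'(y+\beta)$ between the two and using $\phi'=a_1 s$ to remove $\phi$ altogether gives a closed functional equation for $s$ alone,
\[
\big[s(\beta)-y\,s'(\beta)\big]\,s\big(s(\beta)y\big)=s(\beta).
\]
Differentiating this in $y$ and setting $y=0$ (with $s(0)=1$) collapses it to the autonomous Riccati equation $s'(\beta)=s'(0)\,s(\beta)^2$. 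Writing $k=-s'(0)$, the unique solution with $s(0)=1$ is $s(\beta)=1/(1+k\beta)$.

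Finally I would integrate back. From $\phi'(\beta)=a_1 s(\beta)=a_1/(1+k\beta)$ and $\phi(0)=0$ one gets $\phi(\theta)=a_1\theta$ when $k=0$ and $\phi(\theta)=\tfrac{a_1}{k}\log(1+k\theta)$ otherwise. Rescaling the NNPS parameter $\theta$ (which preserves the family and the normalizations $a_0=1$, $a_1>0$) absorbs $|k|$, leaving precisely $\log g(\theta)=\theta$ for $k=0$ (Poisson), $\log g(\theta)=c\log(1+\theta)$ for $k>0$ (binomial), and $\log g(\theta)=-c\log(1-\theta)$ for $k<0$ (negative binomial), with $c=a_1/|k|>0$. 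Here the hypothesis $a_1>0$ is exactly what forces $c>0$ and discards the sign-reversed branches $(1-\theta)^{c}$ and $(1+\theta)^{-c}$, which would require $\phi'(0)=a_1<0$ and violate non-negativity of the $a_k$. For the converse direction one simply verifies \eqref{eq:4} for each of the three $g$'s by exhibiting the corresponding $h_a$ from Table~\ref{tab:1} (equivalently, by checking that $s(\beta)=1/(1+k\beta)$ solves the displayed $s$-equation for every $k$), which completes the ``if'' implication.

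The main obstacle is the reduction in the first step: recognizing that $h_a(\theta)/(a\theta)$ depends on $(a,\theta)$ only through $\beta=(1-a)\theta$, so that the three-parameter relation in $(a,\theta,t)$ with the unknown $h_a$ collapses to the single two-variable identity $g(y+\beta)=g(\beta)g(s(\beta)y)$. Once that identity is secured, everything downstream is a differentiation-and-integration exercise whose endpoint is the Riccati ODE. The remaining delicate points are purely regularity and domain bookkeeping—justifying the evaluations at $t=0$ and $y=0$ and passing from local to global identities—which the real-analyticity of $g$ handles, while the diffuseness of $\nu$ guarantees the continuum of thinning ratios $a$ that makes the functional equation rigid enough to force the answer.
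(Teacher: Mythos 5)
Your proposal is correct and follows essentially the same route as the paper: after taking logarithms, the bone condition \eqref{eq:4} becomes the modified Cauchy equation $\phi(\beta+y)-\phi(\beta)=\phi(s(\beta)y)$ of Lemma~\ref{lem:1}, with $s(\beta)=\phi'(\beta)/\phi'(0)$ obtained by differentiating at the origin and the cross-differentiation in $y$ and $\beta$ yielding $\phi'(t)=C_1/(1-C_2t)$, which is exactly what your Riccati equation for $s$ integrates to. Your write-up additionally makes explicit the substitution $\beta=(1-a)\theta$, $y=a\theta t$ that the paper leaves implicit in deducing the theorem from the lemma, and the sign bookkeeping showing $a_1>0$ excludes the mismatched branches.
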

\begin{proof} The proof follows from Lemma~1 in the appendix and the assumptions on NNPS family. 
\end{proof}
\begin{re}[Enumerating bones in NNPS] There are only three ``bones'' in canonical NNPS such that $a_1>0$, namely $\kappa_\theta$ is either Poisson, negative binomial or binomial.  Note that the entries in Table~\ref{tab:1} are all special cases. 
\end{re}
The ``bone'' families of distributions {\rd $\{\kappa_\theta: \theta\in\Theta\}$} are sometimes referred to as 
{\em Poisson-type} or $PT$\cite{jacobsen2018large}. We also refer to the random measures $N=(\kappa_\theta,\nu)$ where $\kappa_\theta$ is a ``bone'' family as 
{\em Poisson-type} or $PT$ random measures. The following is the main result of this note. 
\begin{thm}[Existence and Uniqueness of PT Random Measures]\label{thm:ptrms} Assume that $K\sim{\rd  \kappa_\theta}$ where pgf $\psi_\theta$ belongs to the canonical NNPS family of distributions and $\{0,1\}\subset supp(K)$. Consider the random measure $N=(\kappa_\theta,\nu)$ on the  space $(E, \cal E)$ and assume that {\rd $\nu$ is diffuse}. Then for any $A\subset E$ with $\nu(A)=a>0$ there exists a mapping $h_a:\Theta\rightarrow \Theta$ such that the restricted random measure is $N_A=(\kappa_{h_a(\theta)}, \nu_A)$, that is, \begin{equation}\label{eq:4a}\E e^{-N_Af}=\psi_{h_a(\theta)}(\nu_A e^{-f})\quad \text{for}\quad f\in {\cal E}_+\end{equation}  iff $K$ is Poisson, negative binomial or binomial. 
\end{thm}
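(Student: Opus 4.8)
The plan is to reduce the random-measure statement \eqref{eq:4a} to the purely analytic ``bone'' condition \eqref{eq:2} of Definition~\ref{def:1}, and then to invoke Theorem~\ref{thm:bones} together with the Remark enumerating its solutions. The starting point is Corollary~\ref{cor:1}: for any $A\subset E$ with $a=\nu(A)>0$ and any $f\in{\cal E}_+$,
$$\E e^{-N_A f}=\psi_\theta(\nu e^{-f}\ind{A}+b),\qquad b=1-a.$$
Since $\nu(e^{-f}\ind{A})=a\,\nu_A e^{-f}$, writing $t=\nu_A e^{-f}$ turns this into $\E e^{-N_A f}=\psi_\theta(at+1-a)$. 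On the other hand, the asserted form $N_A=(\kappa_{h_a(\theta)},\nu_A)$ means exactly $\E e^{-N_A f}=\psi_{h_a(\theta)}(t)$. Hence \eqref{eq:4a} holds for a given $A$ and all $f$ precisely when $\psi_\theta(at+1-a)=\psi_{h_a(\theta)}(t)$ holds for every value $t$ attained by $\nu_A e^{-f}$ as $f$ ranges over ${\cal E}_+$; note also that the parameter of the restricted law can depend on $A$ only through $a=\nu(A)$, since the left-hand side does.

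Next I would pin down the range of $t$. Taking $f$ constant equal to $c\ge 0$ on $A$ gives $\nu_A e^{-f}=e^{-c}$, so $t$ already sweeps out the whole interval $(0,1]$. Because a probability generating function is a power series with radius of convergence at least $1$, both sides of $\psi_\theta(at+1-a)=\psi_{h_a(\theta)}(t)$ are analytic on the open unit disc; agreeing on $(0,1]$, a set with an interior accumulation point, they agree identically by the identity theorem. Thus the random-measure identity for a fixed $A$ is equivalent to the bone condition \eqref{eq:2} holding as an identity in $t$ for that particular value $a=\nu(A)$. This upgrade from an equality of Laplace functionals to an equality of analytic functions is the one step where I expect any real care is needed.

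To recover the full hypothesis of Theorem~\ref{thm:bones} I would then use that $\nu$ is diffuse: a non-atomic probability measure has connected range, so $\{\nu(A):A\in{\cal E}\}=[0,1]$, and hence every $a\in(0,1]$ is realized as some $\nu(A)$. Consequently the bone condition \eqref{eq:2} holds for all $a\in(0,1]$. The remaining standing assumptions align exactly with Theorem~\ref{thm:bones}: $\kappa_\theta$ is canonical NNPS, and $\{0,1\}\subset\mathrm{supp}(K)$ yields $p_0>0$ (already encoded in the canonical normalization $a_0=1$) together with $p_1>0$, i.e.\ $a_1>0$. Theorem~\ref{thm:bones} then forces $\log g(\theta)=\theta$ or $\log g(\theta)=\pm c\log(1\pm\theta)$, which by the Remark are precisely the Poisson, negative binomial, and binomial laws. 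This settles the ``only if'' direction. Conversely, for each of these three families the bone condition \eqref{eq:2} holds (Table~\ref{tab:1} and the Remark), so running the computation of the first paragraph backwards produces \eqref{eq:4a} with the corresponding $h_a$; this gives the ``if'' direction and completes the equivalence.
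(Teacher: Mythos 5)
Your proposal is correct and follows essentially the same route as the paper: reduce \eqref{eq:4a} to the bone condition \eqref{eq:2} by taking constant $f\equiv -\ln t$ and invoking Corollary~\ref{cor:1}, then apply Theorem~\ref{thm:bones} for necessity and verify the three families of Table~\ref{tab:1} directly for sufficiency. The extra details you supply --- the identity-theorem extension in $t$ and the use of diffuseness of $\nu$ to realize every $a\in(0,1]$ --- are left implicit in the paper's one-line proof but are exactly the steps it relies on.
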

\begin{proof} The sufficiency part follows by direct verification of \eqref{eq:4a} for $K$ Poisson, binomial, and negative binomial. The appropriate mappings are given in the last column of Table~1. The necessity part follows upon taking in \eqref{eq:4a} constant $f$ of the form $f(x)\equiv -\ln t$ for some $t\in(0,1]$ and applying Corollary~\ref{cor:1} and Theorem~\ref{thm:bones}. \end{proof}
\begin{re}\label{rem:1a} It follows from Theorem~\ref{thm:marks} that in Theorem~\ref{thm:ptrms} we may replace the laws $\nu$ and $\nu_A$ with $\nu\times Q$ and $\nu_A \times Q$, respectively.
\end{re}
Sometimes it may be more convenient to  parametrize PT distributions by their mean and variance (instead of $\th$) and write $PT(c,\d^2)$. The following is useful in computations related to PT random measures. 
\begin{re}[PT random measures can be thinned on average]\label{rem:2}
Note that if {\rd $N=(\kappa_\theta,\nu)$} is a PT random measure and $K\sim \kappa_\theta=PT(c,\d^2)$ then for any random variable $K_A=N\ind{A}$ where $A\subset E$ such that $\nu(A)=a>0$ it follows from  \eqref{eq:2} that \begin{align*} &\E K_A = a \E K =a c\\ &\E K_A(K_A-1)=a^2 \E K(K-1)=a^2(\d^2+c^2-c). \end{align*} 
\end{re}

{\rd 
\begin{re}[Atomic measure and a non-differentiable mapping]\label{rem:3} The iff result of Theorem~\ref{thm:ptrms} holds for diffuse measures $\nu$. When $\nu$ is atomic, the sufficiency part holds but the necessity part (uniqueness) fails if we also relax the differentiability condition for  the mapping $h$. 
To see this, consider the following simple example where we may construct a bone mapping for $K$ that is not PT. This example was generously pointed out to us by  one of  the reviewers.  Let $E=\{\blacklozenge, \blacksquare\}$ with $\nu\{\blacklozenge\}=1/2$. There are four subsets of $A\subseteq E$ with functions \[\psi_\varnothing(t)=1,\quad\psi_\blacklozenge(t)=\E((t+1)/2)^K=\psi_\blacksquare(t),\quad\psi_E(t)=\E t^K\]
For $A=\{\blacklozenge\}$ with $a=\nu\{\blacklozenge\}=1/2$, the restriction is $\psi_{\theta}(at+1-a)=\E((t+1)/2)^{K_\theta}$ with \[K_\theta=\begin{cases} \tilde{K}=\sum_{i=1}^K C_i & \theta=1\\K&\theta=2\\
\end{cases}\] where $\tilde{K}$ is the restricted or thinned version of $K$ by independent coin tosses $\{C_i\}$ (Bernoulli random variables) and $\Theta=\{1,2\}$. Then the bone condition \[\E((t+1)/2)^{K_\theta}=\E t^{K_{h_a(\theta)}}\] is satisfied with the mapping \[h_a(\theta) = \begin{cases} 1&a=1/2\\2&a=1.\end{cases}\]
\end{re}}

\section{Examples}

Below we discuss some simple examples of applications of PT random measures. The first one is an extension of the well-known construction for compound Poisson random measures. The second one is (to our knowledge) an original idea for application of binomial random measure to monitoring epidemics. Finally, the third one is an extension of a Poisson point process to a PT process  in  a particle system having birth and death dynamics, applied to traffic flows of spacecraft. 

\subsection{Compound PT Processes}\label{sec:cmpd} Assume that the number of customers and their arrivals times  over $n$ days form a PT random measure $(K,\nu)$ with $K\sim PT(c,\d^2)$ either Poisson or negative binomial.  Consider the associated mark random measure $N=(K,\nu\times Q\times Q_2)$ where $T\sim \nu$ gives  customer arrival times, and the transition kernels $Q(t,x)=P(X=x|T=t)$ and $Q_2(x,y)=\nu(y|X=x)$ describe, respectively,  customer's ``state'' $x=1,\ldots, s$ and his/her amount $Y$ spent at the store, so that each customer may be represented by the triple $(T,X,Y)$. We further assume that customers are independent with the conditional variable $(X|T=t)\sim Multinom(1,p^t_1,\ldots, p^t_s)$ and the conditional variable $(Y|X=x)$ with  mean $\alpha_x $ and variance $\beta^2_x$. Assume that we only have information about customers on a specific subset $A$  of $n$ days. We would like to decompose the average total amount $\E{\cal Z}$ spent by customers over the entire $n$ days period into two components corresponding to the observed and unobserved subsets ($A$ and $A^c$). Let therefore 
 \begin{equation}\label{eq:05} \E {\cal Z}= \E{\cal Z}_A+ \E {\cal Z}_{A^c}\end{equation} where ${\cal Z}_B$ is the total amount spend in time set $B\in \{A,A^c\}$. Recall PT random measure $N=(K,\tilde{\nu})$ where $\tilde{\nu}=\nu \times Q\times Q_2$, and consider two restricted measures  $N_B=(K_B,\tilde{\nu}_B)$ where $\tilde{\nu}_B=\nu_B\times Q\times Q_2$ for $B\in \{A,A^c\}$. 
Then 
$$
{\cal Z}= N f \quad \text{and}\quad {\cal Z}_B= N_B f, \quad B\in\{A,A^c \}
$$
where $f(t,x,y)=y$. {\rd By Theorem~\ref{thm:ptrms} ${\cal Z}_A$ and ${\cal Z}_{A^c}$ are also PT random measures with the corresponding $h_a(\th)$ transformation as presented in the last row of Table~\ref{tab:1}.}  Setting $b=\nu(B)$ and recalling Remark~\ref{rem:2}, it follows from  \eqref{eq:01} that for $B\in \{A, A^c \}$
 \begin{align*} \E {\cal Z}_B &= c b\,\tilde{\nu}_B f = c\tilde{\nu} f\ind{B} \\ 
&=c \int \ind{B}(t)\nu(dt) Q(t,dx) Q_2(x,dy) y\\ & =c \int_B \nu(dt) \sum_{x=1}^s p^t_x \alpha_x \end{align*} and 
\begin{align*} \V {\cal Z}_B &= cb\,\tilde{\nu}_B f^2 + b^2 (\tilde{\nu}_B f)^2 (\d^2-c) =c \tilde{\nu} f\ind{B} +(\d^2-c) (\tilde{\nu} f\ind{B})^2 \\
&= c\,\int_B \nu(dt) \sum_{x=1}^s p^t_x (\alpha_x^2+\beta_x^2)+ (\d^2-c) \left(\int_B \nu(dt) \sum_{x=1}^s p^t_x \alpha_x\right)^2.\end{align*} 
Similarly, we find $$\C({\cal Z}_A,{\cal Z}_{A^c})=(\d^2-c)\left(\int_A \nu(dt) \sum_{x=1}^s p^t_x \alpha_x\right)\left(\int_{A^c} \nu(dt) \sum_{x=1}^s p^t_x \alpha_x\right). $$

Consequently, from \eqref{eq:05} $$\E {\cal Z}= c\tilde{\nu} f\ind{A} +c\tilde{\nu} f\ind{A^c} = c\tilde{\nu} f= c \int \nu(dt) \sum_{x=1}^s p^t_x \alpha_x,$$ as well as \begin{align*}\V {\cal Z} 
&= \V {\cal Z}_A+\V {\cal Z}_{A^c} +2 \C({\cal Z}_A,{\cal Z}_{A^c})\\
&=c\,\int \nu(dt) \sum_{x=1}^s p^t_x (\alpha_x^2+\beta_x^2)+ (\d^2-c) \left(\int \nu(dt) \sum_{x=1}^s p^t_x \alpha_x\right)^2.
\end{align*} Note that last expression is equivalent to $c \tilde{\nu} f^2 +(\d^2-c) (\tilde{\nu} f)^2$ as obtained from \eqref{eq:01}. Note also that the term $\d^2-c $ is zero for $K$ Poisson (since then $N_A f$ and $N_{A^c} f$ are independent) but is strictly positive for $K$ negative binomial. Intuitively this implies that in this case the observed variable $N_A f$ carries some information about the unobserved $N_{A^c} f$. {\rd This idea appears to be closely related to negative binomial thinning\citep{steutel1979discrete}.  Observe that Theorem~\ref{thm:ptrms} states  that this type of  thinning operation cannot be extended to other  NNPS distributions.}

\subsection{SIR Epidemic Model} 
Assume that the independency of individuals $(U_i)$ surveyed for symptoms of infectious (or sexually transmitted) disease forms a random measure $N=(K,\nu\times Q )$ on the space $(E,\cal {E})$ where $E=\{(x,y): 0<x<y\}$. Each individual $U_i=(X_i,Y_i)$ is described by a pair  of infection and recovery times  and $K\sim Binom(n,p)$ where $n\ge 1$ and $p>0$ (to be specified later). Assume that at time $t>0$ the collection of labels $L_t(U_i)\in \{S,I,R\}$ for $i=1,\ldots,n$ is observed. 
 
To describe the relevant mean law $\nu\times Q$ consider a standard SIR model describing the evolution of proportions of susceptible ($S$) infectious ($I$) and removed ($R$) units according to the ODE system 
\begin{align}
 &\dot{S_t}=-\beta I_t S_t\label{eq:sir} \\
  & \dot{I}_t=\beta I_t S_t -\ga I_t\nonumber\\
  & \dot{R}_t=\ga I_t\nonumber
  \end{align}
with the initial conditions $S_0=1, I_0=\rho>0, R_0=0$. 
Define $\ro=\beta/\gamma>1$ and note that  
\begin{align}
S_t &=e^{-\ro R_t} \label{eq:sir2}\\
   I_t-\rho e^{-\ga t} &= -\int_0^t \dot{S_u} e^{-\ga(t-u)} du. \label{eq:sir2a} 
 \end{align}
Interpreting \eqref{eq:sir} as the {\em mass transfer model} (see, \cite{khudabukhsh2019survival}) with initial mass $S_0=1$, the function  $S_t$  is  the probability of an initially susceptible unit remaining uninfected at time $t>0$.  Since $S_t+I_t+R_t=1+\rho$ and $I_\infty=0$ then $S_\infty=1-\tau$ where $\tau\in (0,1)$ is the solution of 
\begin{equation}\label{eq:tau}1-\tau=e^{-\ro (\tau+\rho)}.\end{equation} By the law of total probability 
$S_t=\tau \tilde{S}_t+1-\tau$
where $\tS_t$ is a proper survival function conditioned on the fact that the unit will eventually get infected, an event with probability $\tau<1$ given by \eqref{eq:tau}. Note that the Lebesgue density function of the proper conditional distribution function $1-\tS_t$ is simply 
\begin{equation}\label{eq:f} 
\nu(x)=-\dot{S}_x/\tau.
\end{equation} 
Define now $\tau\tilde{I}_t:=I_t-\rho e^{-\ga t}$ and note that from \eqref{eq:sir2a} and the last equation in \eqref{eq:sir} we may interpret $\ga\tilde{I}_t$ as the Lebesgue density of the (conditional) recovery time $t$ given by the Lebesgue density of the sum of two independent random variables, one of them being exponential with rate $\gamma$. Hence, we may define the mean law $\nu\times Q$ by taking \eqref{eq:f} along with the  transition kernel $Q(x,\cdot)$ in the form of the shifted exponential Lebesgue density
 $$Q(x,y)=H_x(y)\sim Exp(\gamma)\ind{\{x<y\}}(y).$$
 To complete the definition of $N$  take $K\sim Binom(n,p)$ with $p=\tau$ defined in \eqref{eq:tau} so that $\E K=n\tau$. 

For fixed $t>0$  let the sets $E^t_S=\{(x,y): x>t \}$, $E^t_I=\{(x,y): x\le t<y \}$ and $E^t_R=\{(x,y): x<y\le t\}$ define the $t$-induced partition of the space $E$. 
Define the label on the $i$-th individual observed at time $t$ as 
$$ L_t(U_i) =\begin{cases}
  S& \text{if $U_i \in E_S$}, \\
  I& \text{if $U_i \in E_I$}, \\
   R& \text{if $U_i \in E_R$}.
\end{cases}$$
 Setting $k=k_S+k_I+k_R$ from \eqref{eq:02} we obtain that 
\begin{align}\label{eq:part}
&\P(N(E^t_S)=k_S,N(E^t_I)=k_I, N(E^t_R)=k_R)\\
 &=\P(N(E^t_S)=k_S,N(E^t_I)=k_I, N(E^t_R)=k_R|K=k)\,\P(K=k)\nn\\
&=\frac{n!}{k_S!\,k_I!\,k_R!\,(n-k)!}\,(\tau \tilde{S}_t)^{k_S} (\tau \tilde{I}_t)^{k_I} \tau^{k_R}(1- \tilde{S}_t- \tilde{I}_t)^{k_R} \, (1-\tau)^{n-k}.\nn
\end{align} 
Since the overall count of susceptible labels is $k_S+n-k$, marginalizing over the unobserved counts $k_S$ and $k$ gives the final distribution of $I,R$ labels among $n$ individuals at time $t$ 
$$\P(N(E^t_I)=k_I, N(E^t_R)=k_R)=\frac{n!}{k_I! k_R! (n-k_I-k_R)! } (\tau \tilde{I}_t)^{k_I} (1- S_t- \tau\tilde{I}_t)^{k_R} S_t^{n-k_I-k_R}. $$
Hence, it follows in particular that for the $i$-th individual its label probabilities at $t$ are $\P(L_t(U_i)=S)=S_t$, $ \P(L_t(U_i)=I)=\tau \tilde{I}_t$ and $\P(L_t(U_i)=R)=1-S_t-\tau\tilde{I}_t$. 

Let $A=(0,t]$ and define the conditional infection Lebesgue density by rescaling \eqref{eq:f} $$\nu_A(x)=\nu(x)\ind{\{x<t\}}(x)/\nu(A)=-\dot{S}_x\ind{\{x<t\}}(x)/(1-S_t).$$ {\rd Then by Theorem~\ref{thm:ptrms} and Remark~\ref{rem:1a} the restricted random measure $N_A=(K_A,\nu_A\times Q)$ is a binomial random measure and }
 according to Remark~\ref{rem:2} $$\E K_A=n(1-\tilde{S}_t)\tau=n(1-S_t)$$
$$\V K_A =n\tau(1-\tau)(1-\tilde{S}_t)^2+n\tau\tilde{S}_t(1-\tilde{S}_t)=nS_t(1-S_t), $$ so we see that $K_A\sim Binom(n,1-S_t).$

\subsection{Spacecraft Traffic Flows} Consider a particle system of vehicles moving about in $E\subset\R^3$. We are interested in the locations of the vehicles in space and time. We assume the vehicles form an independency, i.e. are mutually independent, implied by weak gravitational interaction, and their configuration forms a {\rd random counting measure $N$ with number of vehicles $K\sim\kappa_\theta$}. Particle system ideas have been applied in air traffic control, for example in an ``interacting'' particle system of aircraft for estimating collision probabilities\citep{atc}. {\rd  We consider the scenario of space traffic control, now in its infancy, by taking $E$ as the Solar System and vehicles as spacecraft (such as satellites, rockets, space planes, space stations, probes, etc), although these ideas may be readily applied to air traffic control, which is in a mature state. 

A key issue for space traffic control is modeling the counts of the particle system in various subspaces $\{N\ind{A}: A\subset E\}$, such as in regions of interest\cite{space2}, e.g., space traffic control thinning (restriction) of the particle system into orbital regimes has been considered a topical issue in a recent Presidential Memorandum\cite{space}. Traffic flows can be subject to complex dynamics, with varying degrees of ``interactions'' among spacecraft (in the sense of correlated counts in time and space). 

An obvious extension of the Poisson point process model used in \citep{traffic}  is to use random counting measures closed under thinning with general covariance, i.e. PT random measures. We discuss the role of PT random measures in describing the dynamics of the arrivals of spacecraft into subspaces of time and space.

}

To describe the atomic structure of the particle system, first we label the spacecraft with integers $i$ in $\N_{>0}$. Let $X_i$ be the initial location of spacecraft $i$ in $(E,\mathcal{E})$ and $Y_i=(Y_i(t))_{t\in\R_+}$ be its motion in $(F,\mathcal{F})$. Each $Y_i$ is a stochastic process with state-space $(E,\mathcal{E})$, a path in space and time called a \emph{world line} (also known as a \emph{trajectory} or \emph{orbit}) and regarded as a random element of the function space $(F,\mathcal{F})=(E,\mathcal{E})^{\R_+}$. The quantity $Y_i(t)$ is the location of spacecraft $i$ in $(E,\mathcal{E})$ at time $t$, where $Y_i(0)=X_i$ is the initial location. Therefore each spacecraft $i$ is described by a pair $(X_i,Y_i)$. Assume $X$ has law $\nu$ and the conditional variable $(Y | X=x)$ has transition probability kernel $Q(x,B)=\P(Y\in B|X=x)$ for $B\in\mathcal{F}$. 
We construct random measures from independencies using STC. Let $K\sim\kappa_\theta$ where $\kappa_\theta$ is PT. The independency $\mathbf{Y}=\{Y_i\}$ forms a PT random measure $M=(\kappa_\theta,\mu)$ on $(F,\mathcal{F})$ through STC as \[M(A) = \sum_{i=1}^{K}\ind{A}(Y_i)\quad\text{for}\quad A\in\mathcal{F}\] with mean measure $\mu=\nu Q$ defined by \[\mu(A) = \int_{E}\nu(\D x)Q(x,A)\quad\text{for}\quad A\in\mathcal{F}.\] Consider the mapping $h:F\mapsto E$ as $h(w) = w(t)$ for $w\in F$ and $t$ fixed. The PT image random measure $N_t=M\circ h^{-1}=(\kappa_\theta,\mu_t)$ on $(E,\mathcal{E})$ is formed by $\mathbf{Y}(t)=\{Y_i(t)\}$ through STC as \begin{equation}\label{eq:lat}N_t(A) = \sum_{i=1}^K\ind{A}(Y_i(t))\quad\text{for}\quad A\in\mathcal{E}\end{equation} with mean $\mu_t=\mu\circ h^{-1}=\nu P_t$ defined by \begin{equation}\label{eq:mut}\mu_t(A) = \int_{E}\nu(\D x)\int_F Q(x,\D w)\ind{A}(w(t)) = \int_{E}\nu(\D x)P_t(x,A)\quad\text{for}\quad A\in\mathcal{E}\end{equation} and the $\{Y_i(t)\}$ having conditional distributions $\{P_t(X_i,\cdot)\}$ defined by {\rd transition kernel} \[P_t(x,A)=\int_F Q(x,\D w)\ind{A}(w(t))=\P(w\in F: w(t)\in A,\, w(0)=x)\quad\text{for}\quad A\in\mathcal{E}.\] The \eqref{eq:lat} and \eqref{eq:mut} $N_t = (\kappa_\theta, \mu_t)$ defines an immortal particle system on $(E,\mathcal{E})$. {\rd The family of transition kernels $(P_t)_{t\in\R_+}$ is the transition semigroup in the theory of Markov processes\cite{cinlar}. Queries about the particle system $f\in\mathcal{E}_+$ form random variables $N_tf$ with mean \eqref{eq:01} and variance \eqref{eq:var2}. 

The concept of thinning is well established for particle systems, such as in the Bienaym\'{e}-Galton-Watson branching process literature\cite{feller} as well as in the analysis of count time-series using PT thinning operators\cite{thin2}. Space traffic control thinning (restriction) of the particle system into disjoint subspaces is a key operation. Using PT random measures, let $A\subset E$ with $\mu_t(A)=a>0$ be a subspace and $N_A=(\kappa_{h_a(\theta)},\mu_A)$ be the restricted random measure of $N_t$ with $\mu_A(\cdot)=\mu_t(A\cap\cdot)/\mu_t(A)$. Theorem~\ref{thm:ptrms} says all such thinnings $\{N_A: A\subset E\}$ are PT. Hence Theorem~\ref{thm:ptrms} is archetypical for space traffic control. Moreover, the PT family members identified in Theorem~\ref{thm:ptrms} convey distinct dynamic meanings for the counting process of the particle system, reflected in their covariances. For Poisson, the counts of spacecraft arrivals in disjoint subspaces are independent and Markov and correspond to low-density flows of freely passing spacecraft\cite{traffic}. For binomial, the counts in disjoint subspaces are negatively correlated and are identified to following behaviors, platoons, or congestion\cite{traffic2}. For negative binomial, counts in disjoint subspaces are positively correlated and are identified to flows having cycles, control intersections, or contagion\cite{traffic2}. These ideas carry over to the random variables $\{N_t f: f\in\mathcal{E}_+\}$.
}

Additional frills for the particle system include a notion of birth and death, manufacture and destruction respectively. Death is achieved through a single point extension of the state-space to contain a point $\partial$ outside of $E$ called a cemetery with measure space $(\bar{E},\bar{\mathcal{E}})$, where $\bar{E}=E\cup\{\partial\}$ and $\bar{\mathcal{E}}=\mathcal{E}\cup\{A\cup\{\partial\}: A\in\mathcal{E}\}$. The world line space becomes $(F,\mathcal{F})=(\bar{E},\bar{\mathcal{E}})^{\R_+}$. Manufacturing is the notion of an arrival time for each spacecraft $T_i$ on $(\R,\mathcal{B}_{\R})$, independent of spacecraft location or motion. $Y_i(t)$ is the location of spacecraft $i$ in $(\bar{E},\bar{\mathcal{E}})$ at time $T_i + t$, and $Y_i(0)=X_i$ is the (manufacturing) location at time $T_i$. For Earth or Moon manufacturing, the motion $(Y(t))_{t>0}$ involves moving the manufactured spacecraft to a spaceport, launching, and bringing into orbit. Some spacecraft undergo repeated orbital maneuvers, such as landing at a spaceport, launching, and bringing into orbit, repeating many times\citep{spacex}. Note that under this setup, the measure $P_t$ is defective on $(E,\mathcal{E})$ as some spacecraft that are manufactured are destroyed with probability $1-P_t(E)$. 

The independency $(\mathbf{T},\mathbf{X},\mathbf{Y})=\{(T_i,X_i,Y_i)\}$ forms the random measure $N=(K,\eta\times\nu\times Q)$ on $\R\times E\times F$ through STC as \[N(A) = \sum_{i=1}^K\ind{A}(T_i,X_i,Y_i)\quad\text{for}\quad A\in\mathcal{B}_{\R}\otimes\mathcal{E}\otimes\mathcal{F}.\] To describe spacecraft manufactured and not yet destroyed, let \[h(s,x,w)= \begin{cases} 
    (s,x,w(t-s))& \text{for}\quad s\le t\\
    (s,x,\partial) & \text{for}\quad s>t
   \end{cases}\] and put $N\circ h^{-1}$ as the image of $N$ under $h$. Then spacecraft manufactured and not yet destroyed at time $t$ are represented by the trace of $N\circ h^{-1}$ on $E$. This is formed by $(\mathbf{T},\mathbf{X},\mathbf{Y}(t-\mathbf{T}))=\{(T_i,X_i,Y_i(t-T_i))\}$ through STC as \begin{equation}\label{eq:nt}N_t(A) = \sum_{i=1}^K \ind{(-\infty,t]\times E\times A}(T_i,X_i,Y_i(t-T_i))\quad\text{for}\quad A\in\mathcal{E}\end{equation} with mean $\mu_t$ defined by \begin{equation}\label{eq:mut2}\mu_t(A) =\int_{(-\infty,t]}\eta(\D s)\int_{E}\nu(\D x)\int_F Q(x,\D w)\ind{A}(w(t-s))\quad\text{for}\quad A\in\mathcal{E}.\end{equation} {\rd Other elaborations of the model $N_t=(\kappa_\theta,\mu_t)$ include expanding the state-space of the particle system to provision additional mark spaces, such as radiation detection and crew and passenger health monitoring systems for each spacecraft.}

\section{Discussion and Conclusions}

It is well- known that the PT distributions are invariant under thinning\cite{thin2,thin}. The ``if'' part of our Theorem~\ref{thm:ptrms}  gives a different  proof  of this result in terms of  a certain functional equation called the ``bone'' condition.  To the best of our knowledge the ``only if'' part of the theorem is novel. Therefore,  the main result is the definite one on the existence and uniqueness of PT random measures as random counting measures invariant under thinning.

We characterize PT distributions as those discrete distributions whose generating functions  satisfy the ``bone'' condition. Hence we can refer to the PT distributions as the ``bone'' class of distributions. It turns out that there are other characterizations for PT distributions aside from the ``bone'' condition. For example, the PT distributions arise when considering discrete distributions whose mass functions obey a certain recursive relation and are called the \emph{Panjer} or \emph{(a,b,0)} classes of distributions\citep{panjer}. Yet another (similar) recursive relation involving mass functions recapitulates the PT distributions as the \emph{Katz} family of distributions\citep{katz}. Another route to attaining the PT distributions is starting with and generalizing the Poisson distribution to the Conway-Maxwell-Poisson distribution, each PT member being a special or limiting case\citep{cmp}. These highlight how PT distributions possess rich structure and are independently retrievable using multiple distinct hypotheses. 


Given the ubiquity of random count data, PT random measures have wide utility in the sciences. We  illustrate this  with several examples.  First, we give an extension to the compound model applied to modeling the amount of money spent by customers in a store, using compound Poisson and negative binomial random measures. We also give an application to monitoring epidemics, showing that the popular SIR model has the structure of a binomial random measure. Finally, we give an application to closed particle systems, highlighting how the distinct covariances of the PT random measures confer multiple dynamical meanings to the particle system. %

\section{Acknowledgements} This work was funded by the US National Science Foundation grant DMS 1853587. The authors are indebted to the two reviewers for many  valuable comments and suggestions that helped  improve the original manuscript. This work does not have any conflicts of interest.

\section{Bibliography}

\section{Appendix Section}

\appendix

\section{Proofs}
{\bf Proof of Theorem~\ref{thm:marks}}. 
It suffices to verify the claimed identity for the Laplace functional of $M=(K, \nu\times Q)$ with arbitrary $f\in ({\cal E}\otimes {\cal F})_+$ as it  will in particular imply the existence of $M$. To this end consider 
$$\E e^{-M f} =\E (\E e^{-f(X,Y)})^K =\psi((\nu\times Q) e^{-f})$$ where $\psi(\cdot )$ is pgf of $K$.
Since $$(\nu\times Q)\, e^{-f} =\E \int _F Q(X, dy)\,e^{-f}\circ (X,y)= \nu e^{-g},$$  where $g\in {\cal E}_+$ is defined by 
$$e^{-g(x)}= \int_F Q(x,dy)e^{-f(x,y)}, $$ therefore 
$$\E e^{-M f}=\psi((\nu\times Q) e^{-f})=\psi(\nu e^{-g})=\E e^{-Ng}<\infty.$$\qed

\noindent {\bf Proof of Theorem~\ref{thm:bones}}. The result follows from the following lemma.
\begin{lem}[Modified Cauchy Equation]\label{lem:1} 
Assume that $f(t)$  is twice continuously differentiable in some neighborhood of the origin, satisfies $f(0)=0$ and $f^\prime(0)>0$ as well as 
\begin{equation}\label{eq:5} f(s+t)-f(s)=f(h(s)\,t)\end{equation} where $h(s)$ is $t$ free.
Then $f$ is of the form $ f(t) =At$ or $f(t)=B\log (1+At)$ for some $A,B\ne 0$. Moreover $h(s)=f^\prime(s)/f^\prime(0)$.
\end{lem}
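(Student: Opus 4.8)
The plan is to convert the functional equation \eqref{eq:5} into an ordinary differential equation for $f'$ by differentiating in the free variable $t$ and then evaluating at $t=0$, exploiting precisely the hypothesis that $h(s)$ does not depend on $t$. Throughout I would work in a fixed neighborhood of the origin on which $f$ is $C^2$ and \eqref{eq:5} is valid; since $f'$ is continuous and $f'(0)>0$, we may shrink this neighborhood so that $f'>0$ on it, which legitimizes the divisions performed below.

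First I would fix $s$ and regard both sides of \eqref{eq:5} as functions of $t$ alone. Because $f$ is $C^2$ and the right-hand side is the composition of $f$ with the linear map $t\mapsto h(s)\,t$, both sides are twice differentiable in $t$. Differentiating once gives $f'(s+t)=h(s)\,f'(h(s)t)$; setting $t=0$ and using $f(0)=0$ yields at once the asserted relation $h(s)=f'(s)/f'(0)$. Differentiating a second time in $t$ gives $f''(s+t)=h(s)^2 f''(h(s)t)$, and evaluating at $t=0$ produces $f''(s)=h(s)^2 f''(0)$. Substituting the formula just obtained for $h(s)$ then collapses everything to the autonomous equation
\begin{equation*} f''(s)=C\,\bigl(f'(s)\bigr)^2,\qquad C:=f''(0)/f'(0)^2. \end{equation*}

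It then remains only to integrate this ODE subject to $f(0)=0$ and $f'(0)>0$. Writing $u=f'$ reduces it to $u'=Cu^2$. If $C=0$, then $u\equiv f'(0)=:A>0$ and hence $f(t)=At$, the linear branch. If $C\neq 0$, separating variables gives $u(s)=f'(0)/(1-C f'(0)\,s)$, and a single further integration using $f(0)=0$ yields $f(t)=B\log(1+At)$ with $A=-C f'(0)$ and $B=-1/C$; here $A,B\neq 0$ precisely because $C=f''(0)/f'(0)^2\neq 0$ and $f'(0)>0$. These two cases exhaust the possibilities and give exactly the two claimed forms.

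I expect the only delicate points to be bookkeeping rather than conceptual: justifying that \eqref{eq:5} may be differentiated twice in $t$ (immediate from the $C^2$ hypothesis together with linearity of the inner argument $h(s)t$), and confirming the non-vanishing of $A,B$ from $f'(0)>0$ and the case split on $f''(0)$. The genuinely substantive move, on which the whole argument hinges, is the observation that, because $h(s)$ is $t$-free, evaluating the $t$-derivatives at $t=0$ reduces the two-variable functional equation to a closed first-order ODE in $s$ alone.
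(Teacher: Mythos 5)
Your proof is correct and lands on exactly the same two families with the same constants, but the reduction to an ODE is carried out differently from the paper. The paper differentiates \eqref{eq:5} once in $t$ (obtaining $h(s)=f'(s)/f'(0)$, as you do) and then once in $s$, equates the two resulting expressions for $f'(s+t)$, and sets $s=0$ to read off $f'(t)=C_1/(1-C_2t)$ directly; this requires the side remark that $h$ is differentiable (which follows from $h=f'/f'(0)$ and the $C^2$ hypothesis). You instead differentiate twice in $t$ only and evaluate at $t=0$, which collapses the equation to the autonomous ODE $f''=C(f')^2$ with $C=f''(0)/f'(0)^2$; this avoids touching the $s$-derivative and hence never needs differentiability of $h$ as a separate step, at the (mild) cost of integrating a second-order rather than first-order equation. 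Your case split $C=0$ versus $C\neq 0$ corresponds precisely to the paper's $C_2=h'(0)=0$ versus $C_2\neq 0$, since $h'(0)=f''(0)/f'(0)=Cf'(0)$, and your constants $A=-Cf'(0)$, $B=-1/C$ agree with the paper's $A=-C_2$, $B=-C_1/C_2$. One cosmetic point: the identity $h(s)=f'(s)/f'(0)$ uses only $f'(0)>0$, not $f(0)=0$ (which enters later as the integration constant), so the phrase ``using $f(0)=0$'' at that step is misplaced but harmless.
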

\begin{proof}
Differentiating \eqref{eq:5} with respect to $t$ we obtain
\begin{equation}\label{eq:5a} f'(s+t) =h(s)f'(h(s)\,t). \end{equation} Taking the above at $t=0$ and denoting $C_1=f'(0)>0$ gives 
\begin{equation}\label{eq:6}
h(s)=f'(s)/C_1.
\end{equation}
Differentiating \eqref{eq:5} with respect to $s$  yields likewise  (note that $h$ is differentiable in view of \eqref{eq:6})
$$ f'(s+t)=f'(s)+t\,h'(s)f'(h(s)\,t). $$ Equating the two right hand side expressions and using \eqref{eq:6} we have 
\begin{align*} C_1h(s)+t\,h'(s)f'(h(s)\,t) &=h(s)f'(h(s)\,t)\\
f'(h(s)t)= \frac{C_1}{1-t\frac{h'(s)}{h(s)}}.
\end{align*} 
In the last expression we take now $s=0$, denote  $C_2=h'(0)$ and consider two cases according to  $C_2=0$ and $C_2\ne 0$. Since by \eqref{eq:6} $h(0)=1$, for the case $C_2=0$  \begin{equation}\label{eq:7}
f(t)=At\end{equation}  where ($A=C_1$) and we have one solution. 
 Consider now $C_2\ne 0$,  then 
 $$f'(t)= \frac{C_1}{1-C_2t}$$
 and hence the general form of $f$ when it is not linear is 
 $$f(t) = B\log (1+At)$$ where $B=-C_1/C_2$ and $A=-C_2$. This as well as \eqref{eq:7} and \eqref{eq:6} give the hypothesis of the theorem. 
 \end{proof} 
 


\begin{thebibliography}{99}

\bibitem{kallenberg}
Kallenberg Olav. {\it Foundations of Modern Probability}.
\newblock Springer-Verlag New York; 2002.

\bibitem{cinlar}
Cinlar Erhan. {\it Probability and Stochastics}.
\newblock Springer-Verlag New York; 2011.

\bibitem{rm}
Kallenberg Olav. {\it Random Measures, Theory and Applications}.
\newblock Springer; 2017.

\bibitem{steutel1979discrete}
Steutel FW, Van~Harn K. Discrete analogues of self-decomposability and
  stability.  {\it The Annals of Probability. }1979;:893--899.

\bibitem{binthin}
Al-Osh M.~A., Alzaid A.~A.. First-order integer-valued autogressive (INAR(1))
  process.  {\it Journal of Time Series Analysis. }1987;8(3):261--275.

\bibitem{jacobsen2018large}
Jacobsen Karly~A, Burch Mark~G, Tien Joseph~H, Rempa{\l}a Grzegorz~A. The large
  graph limit of a stochastic epidemic model on a dynamic multilayer network.
  {\it Journal of Biological Dynamics. }2018;12(1):746--788.

\bibitem{khudabukhsh2019survival}
KhudaBukhsh Wasiur~R, Choi Boseung, Kenah Eben, Rempala Grzegorz~A. Survival
  Dynamical Systems for the Population-level Analysis of Epidemics.  {\it arXiv
  preprint arXiv:1901.00405. }2019;.

\bibitem{atc}
Prandini Maria, Blom Henk~A.P., Bakker G.J.~(Bert). Air traffic complexity and
  the interacting particle system method: An integrated approach for collision
  risk estimation.  In: :2154-2159; 2011.

\bibitem{space2}
Eves Stuart. {\it Space Traffic Control}.
\newblock American Institute of Aeronautics and Astronautics; 2017.

\bibitem{space}
Trump Donald~J. Space Policy Directive-3, National Space Traffic Management
  Policy  Presidential Memoranda 2018.

\bibitem{feller}
Feller W. {\it An introduction to probability theory and its applications}.
\newblock John Wiley \& Sons; 1968.

\bibitem{thin2}
Scotto Manuel~G., Wei{\ss} Christian~H., Gouveia S{\'o}nia. Thinning-based
  models in the analysis of integer-valued time series: a review.  {\it
  Statistical Modelling. }2015;15(6):590--618.

\bibitem{traffic}
Breiman Leo. The Poisson Tendency in Traffic Distribution.  {\it Ann. Math.
  Statist.. }1963;34(1):308--311.

\bibitem{traffic2}
Gerlough Daniel, Huber Matthew. {\it Traffic Flow Theory. }: Transportation
  Research Board, National Research Council; 1975.

\bibitem{spacex}
Musk Elon. Making Humans a Multi-Planetary Species.  {\it New Space.
  }2017;5(2):46--61.

\bibitem{thin}
Harremoes P., Johnson O., Kontoyiannis I.. Thinning, Entropy, and the Law of
  Thin Numbers.  {\it IEEE Transactions on Information Theory.
  }2010;56(9):4228--4244.

\bibitem{panjer}
Panjer Harry~H.. Recursive Evaluation of a Family of Compound Distributions.
  1981;12(1):22-26.

\bibitem{katz}
Katz L.. Classical and Contagious Discrete Distributionsch.~Unified treatment
  of a broad class of discrete probability distributions, :175-182.
\newblock Pergamon Press, Oxford 1965.

\bibitem{cmp}
Conway R.~W., Maxwell W.~L.. A Queuing Model with State Dependent Service
  Rates.  {\it Journal of Industrial Engineering. }1962;12.

\end{thebibliography}
\end{document}